\numberwithin{equation}{section}
\theoremstyle{plain}
\newtheorem{thm}{Theorem}[section]
\newtheorem{lem}{Lemma}[section]
\newtheorem{defn}{Definition}[section]
\newtheorem{ex}{Example}[section]
\newtheorem{rem}{Remark}[section]
\newtheorem{cor}{Corollary}[section]
\begin{document}

\begin{frontmatter}
\title{Calculation of Lebesgue Integrals by Using Uniformly  Distributed Sequences}
\runtitle{Calculation of Lebesgue Integrals }

\begin{aug}
\author{\fnms{Gogi} \snm{Pantsulaia}\thanksref{t1}\ead[label=e1]{g.pantsulaia@gtu.ge}}
\and
\author{\fnms{Tengiz } \snm{Kiria}\ead[label=e2]{t.kiria@gtu.ge}}

\affiliation{}

\thankstext{t1}{The research for this paper was partially supported by Shota
Rustaveli National Science Foundation's Grant no FR/116/5-100/14}
\runauthor{G.Pantsulaia and T.Kiria}

\address{I.Vekua Institute of Applied Mathematics, Tbilisi - 0143, Georgian Republic \\
\printead{e1}\\
Georgian Technical University, Tbilisi - 0175, Georgian Republic\\
  \printead*{e2}
}
\end{aug}

\begin{abstract} We present modified  proof of a certain version of Kolmogorov's
strong law of large numbers for calculation of Lebesgue Integrals by using uniformly  distributed sequences in $(0,1)$. We  extend the result of C. Baxa and J. Schoi$\beta$engeier (cf.\cite{BaxSch2002}, Theorem 1, p. 271) to a maximal  set of uniformly distributed (in $(0,1)$) sequences $S_f \subset(0,1)^{\infty}$ which  strictly contains  the set of sequences  of the form $(\{\alpha n\})_{n \in
{\bf N}}$ with irrational number $\alpha$ and  for which $\ell_1^{\infty}(S_f)=1$,  where $\ell_1^{\infty}$ denotes the infinite power of the linear Lebesgue measure  $\ell_1$ in $(0,1)$.
\end{abstract}

\begin{keyword}[class=MSC]
\kwd[Primary ]{28xx}
\kwd{03xx }
\kwd[; Secondary ]{28C10}
{62D05}
\end{keyword}

\begin{keyword}
\kwd{Uniformly  distributed sequence}
\kwd{improper Riemann integral}
\kwd{ Monte-Carlo algorithm}
\end{keyword}

\end{frontmatter}

\section{Introduction}
In this note we show that the technique for numerical calculation of some one-dimensional Lebesgue  integrals is similar to the technique
which  was given  by Hermann Weyl's \cite{Weyl} celebrated theorem as follows.

\medskip

\begin{thm}(\cite{KuNi74}, Theorem 1.1, p. 2) The sequence $(x_n)_{n \in N}$ of real numbers is
u.d. mod 1 if and only if for every real-valued continuous function $f$ defined
on the closed unit interval $[0, 1]$ we have
\begin{align*}
~~~~~~~~~~~~~~~~~~\lim_{ N \to \infty}\frac{\sum_{n=1}^Nf(\{x_n\})}{N} =\int_{0}^{1} f(x) dx,   ~~~~~~~~~~~~~~~~~~~~~~~~ (1.1)
\end{align*}
where $\{\cdot\}$ denotes the fractional part of the real number.
\end{thm}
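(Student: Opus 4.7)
The plan is to prove this classical equivalence by two mutual sandwich approximations: approximating characteristic functions of subintervals by continuous functions, and approximating continuous functions by step functions.

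For the ($\Rightarrow$) direction, I would begin from the definition of uniform distribution mod 1, which is precisely (1.1) with $f = \chi_{[a,b]}$ for every $0 \le a < b \le 1$. By linearity this extends to every step function on $[0,1]$. Given a continuous $f$ and $\epsilon > 0$, the uniform continuity of $f$ on the compact interval $[0,1]$ yields step functions $s^-_\epsilon \le f \le s^+_\epsilon$ with $\|s^+_\epsilon - s^-_\epsilon\|_\infty < \epsilon$ (built from any partition whose mesh is smaller than the $\epsilon$-modulus of continuity of $f$). Applying (1.1) to $s^\pm_\epsilon$, sandwiching $\frac{1}{N}\sum_{n \le N} f(\{x_n\})$ between the two corresponding averages, and letting $\epsilon \to 0$ gives (1.1) for $f$.

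For the ($\Leftarrow$) direction, given $[a,b]\subset[0,1]$ and $\epsilon>0$, I would construct continuous piecewise-linear ``trapezoid'' functions $g^-, g^+ \in C([0,1])$ with $g^- \le \chi_{[a,b]} \le g^+$ and $\int_0^1 (g^+ - g^-)\,dx < \epsilon$: explicitly, $g^+$ equals $1$ on $[a,b]$, $0$ outside a slight enlargement, and is linear on flanks of width $\epsilon/4$, while $g^-$ is defined dually inside $[a,b]$. Applying the hypothesis (1.1) to both $g^\pm$, the counting fraction $\frac{1}{N}\#\{n \le N : \{x_n\}\in[a,b]\}$ is squeezed between $\int_0^1 g^-\,dx + o(1)$ and $\int_0^1 g^+\,dx + o(1)$, hence within $2\epsilon$ of $b-a$ for $N$ large. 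Letting $\epsilon \to 0$ recovers uniform distribution mod 1.

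The argument contains no deep obstacle; the main technical care lies in the correct bookkeeping when taking $\liminf$ and $\limsup$ in the sandwich inequalities and then passing $\epsilon \to 0$. Both halves are routine executions of the principle ``approximate by nicer functions, then apply the already-known case'': the feasibility rests on the facts that continuous functions and step functions on $[0,1]$ approximate each other uniformly, while characteristic functions of subintervals can be squeezed arbitrarily closely in $L^1$ by continuous ones.
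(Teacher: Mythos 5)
Your argument is correct: it is the standard double-sandwich proof of Weyl's criterion (step functions under/over a continuous $f$ for one direction, continuous trapezoids around $\chi_{[a,b]}$ for the other), with only routine care needed at interval endpoints. The paper itself gives no proof of this statement --- it is quoted verbatim from \cite{KuNi74} --- and your argument is essentially the one found in that reference, so there is nothing in the paper to contrast it with.
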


Main corollaries of this theorem successfully were used in Diophantine approximations and have
applications to Monte-Carlo integration (see, for example, \cite{KuNi74},\cite{Hardy2}, \cite{Hardy1}). During the last decades the methods of
the theory of uniform distribution modulo one  have been intensively used for calculation of improper Riemann integrals(see, for example,
\cite{Sobol73}, \cite{BaxSch2002}).

In this note we are going to consider some applications of  Kolmogorov  strong law of large numbers which can be considered as a certain extension of the  Hermann Weyl's above mentioned theorem from the class of Riemann's integrable functions to the class of  Lebesgue integrable functions. We present  our  proof of this century theorem which differs from Kolmogorov's original proof. Further,   by using this theorem we  present a certain improvement of  the following result of C. Baxa and  J. Schoi$\beta$engeier

 \begin{thm}(\cite{BaxSch2002}, Theorem 1, p. 271)Let $\alpha$ ne an irrational number,  $\mathbf{Q}$ be a set of all rational numbers  and $F \subseteq [0,1]\cap \mathbf{Q}$ be finite. Let
$f : [0,1] \to R$ be an integrable, continuous almost everywhere and locally bounded on $[0,1] \setminus F$. Assume further that for every $\beta \in F$
there is some neighbourhood $U$ of $\beta$ such that   $f$ is either bounded or monotone in $[0,\beta) \cap U$ and in $(\beta,1]\cap U$ as well.
Then the following conditions are equivalent:

1) $\lim_{n \to \infty}\frac{f(x_n)}{n}=0$;

2) ~$\lim_{N \to \infty}\frac{1}{N}\sum_{k=1}^Nf(x_k) $ exists;

3) $\lim_{N \to \infty}\frac{1}{N}\sum_{k=1}^Nf(x_k) = \int_{(0,1)}f(x)dx$;

are equivalent
 \end{thm}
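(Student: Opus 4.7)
My plan is to establish the cycle (3)$\Rightarrow$(2)$\Rightarrow$(1)$\Rightarrow$(3). The implication (3)$\Rightarrow$(2) is immediate. For (2)$\Rightarrow$(1), writing $S_N=\sum_{k=1}^N f(x_k)$, the telescoping identity
$$\frac{f(x_N)}{N}=\frac{S_N}{N}-\frac{N-1}{N}\cdot\frac{S_{N-1}}{N-1}$$
shows that if $S_N/N$ tends to a limit $L$, then $f(x_N)/N\to L-L=0$. All the work is in (1)$\Rightarrow$(3), which I would prove by a localization argument around the finite singular set $F$, leveraging both the equidistribution and the arithmetic structure of the sequence $(\{\alpha n\})_{n\in\mathbf{N}}$.

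First I fix a small $\eta>0$ and set $K_\eta=[0,1]\setminus\bigcup_{\beta\in F}(\beta-\eta,\beta+\eta)$. On the compact set $K_\eta$ the function $f$ is bounded (local boundedness on $[0,1]\setminus F$ plus compactness) and continuous almost everywhere, hence Riemann integrable. The extension of Weyl's criterion from continuous to Riemann integrable functions therefore yields
$$\lim_{N\to\infty}\frac{1}{N}\sum_{\substack{k\le N\\ x_k\in K_\eta}}f(x_k)=\int_{K_\eta}f(x)\,dx,$$
and Lebesgue integrability of $f$ gives $\int_{K_\eta}f\to\int_0^1 f$ as $\eta\to 0$.

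The remaining task is to show that the near-singularity contribution
$$R_N(\eta)=\frac{1}{N}\sum_{\beta\in F}\sum_{\substack{k\le N\\ x_k\in(\beta-\eta,\beta+\eta)}}f(x_k)$$
can be made small, uniformly in $N$, by taking $\eta$ small. I split the work $\beta$ by $\beta$ and side by side. When $f$ is bounded on the relevant half-neighbourhood, equidistribution makes the corresponding contribution $O(\eta)$. In the genuinely singular case, $f$ is monotone and unbounded on, say, $(\beta,\beta+\eta)$; I would then enumerate the visit times $k_1<k_2<\cdots$ of $(x_n)_{n\le N}$ to $(\beta,\beta+\eta)$ in order of proximity to $\beta$ and use a three-distance/discrepancy bound for $(\{\alpha n\})$ to obtain an estimate of the form $|x_{k_j}-\beta|\ge c_\alpha j/N$. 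Monotonicity then majorizes $|f(x_{k_j})|$ by $|f(\beta+c_\alpha j/N)|$, so an Abel summation turns the inner sum into a Riemann sum approximating $\int_\beta^{\beta+\eta}|f|$, plus a handful of the most singular terms whose size is controlled directly by hypothesis (1).

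The main obstacle is precisely this last step. Equidistribution alone is too weak: one must exploit the arithmetic structure of $(\{\alpha n\})$ (its bounded discrepancy, or the three-distance theorem) to rule out abnormally deep incursions of $x_k$ into the singularities of $f$, and then couple that geometric control with the monotonicity to convert the pointwise decay $f(x_n)/n\to 0$ into a uniform bound on $R_N(\eta)$. Once this is accomplished, letting $\eta\to 0$ in the decomposition from Step 2 yields (3) and closes the cycle.
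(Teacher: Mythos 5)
First, a point of orientation: the paper contains no proof of this statement. Theorem 1.2 is quoted verbatim from Baxa and Schoi$\beta$engeier \cite{BaxSch2002} purely as motivation; what the paper actually proves (Theorem 3.3, Corollary 3.3) is a measure-one analogue obtained by probabilistic means, which by construction says nothing about the specific sequences $(\{n\alpha\})_{n\in\mathbf{N}}$, since these form an $\ell_1^{\infty}$-null subset of $(0,1)^{\infty}$. So your attempt has to be judged on its own merits rather than against an argument in the text.

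On those merits: the implications (3)$\Rightarrow$(2)$\Rightarrow$(1) are correct (your telescoping identity is exactly the computation the paper uses in Corollary 3.2 to show $A_f\subseteq C_f$), and splitting (1)$\Rightarrow$(3) into a Riemann-integrable part on $K_\eta$ plus a singular remainder $R_N(\eta)$ is the right skeleton. But the step you yourself identify as the main obstacle rests on a false estimate, not merely an unfinished one. The bound $|x_{k_j}-\beta|\ge c_\alpha j/N$ does not follow from the three-distance theorem for general irrational $\alpha$; it is essentially the definition of $\alpha$ being badly approximable. The three-distance theorem only gives that the points $\{\alpha\},\dots,\{N\alpha\}$ have minimal gap of order $\|q_K\alpha\|$, where $q_K$ is the largest continued-fraction denominator with $q_K\le N$, and for Liouville numbers $\|q_K\alpha\|\approx 1/q_{K+1}$ can be smaller than any fixed power of $1/N$; the $j$-th nearest visit to a rational $\beta$ is then only bounded below by roughly $j$ times this minimal gap, which may be far smaller than $j/N$. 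This is not a repairable technicality: if a uniform bound $R_N(\eta)=O(\eta)$ followed from spacing considerations plus hypothesis (1) applied to ``a handful'' of extreme terms, condition (3) would hold essentially unconditionally, whereas for suitable Liouville $\alpha$ and monotone integrable $f$ (e.g.\ $f(x)=x^{-\gamma}$ with $\gamma$ close to $1$, singular at the rational point $0$) conditions (1)--(3) all fail simultaneously --- the single index $n=q_K$ already makes $f(x_n)/n$ unbounded. That failure is the entire reason the theorem is stated as an equivalence. A correct proof of (1)$\Rightarrow$(3) has to feed hypothesis (1) quantitatively into the whole singular sum, typically by organizing the indices $n$ with $\{n\alpha\}$ in the critical window via the Ostrowski representation, block by block between consecutive denominators $q_K$. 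As it stands, your proposal establishes the two easy implications and leaves the substantive one open.
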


  More precisely, we  will extend the result of Theorem 1.2 to a maximal set $S_f \subset (0,1)^{\infty}$ of uniformly distributed (in $(0,1)$)sequences strictly containing all sequences  of the form $(\{\alpha n\})_{n \in
{\bf N}}$ where $\alpha$ is an irrational numbers and  for which $\ell_1^{\infty}(S_f)=1$,  where $\ell_1^{\infty}$ denotes the infinite power of the linear Lebesgue measure  $\ell_1$ in $(0,1)$.

The paper is organized as follows.

In Section 2  we consider some auxiliary notions and facts from the theory of uniformly distributed sequences and probability theory. In
Section 3 we present our main results.

\section{Auxiliary notions and facts}

\begin{defn} A  sequence $s_1, s_2, s_3, \cdots$ of real numbers from the interval $[0,1]$ is said
to be  uniformly distributed in the interval $[0,
1]$ if for any subinterval $[c, d]$ of the $[0, 1]$ we have
$$
\lim_{n \to \infty}\frac{\#(\{s_1, s_2, s_3, \cdots, s_n\} \cap
[c,d])}{n}=d-c,
$$
where $\#$ denotes the counting measure.
\end{defn}


\begin{ex} (\cite{KuNi74}, Exercise 1.12, p. 16) The sequence of all
multiples of an irrational $\alpha$
 \begin{align*}0, \{\alpha\}, \{2\alpha\}, \{3\alpha\}
\cdots
\end{align*}  is uniformly distributed in $(0,1)$, where $\{\cdot\}$ denotes the fractional part of the real number.
\end{ex}

\begin{lem}(\cite{KuNi74} Theorem 2.2, p.183)
Let $S$ be a set of all elements of
$[0,1]^{\infty}$ which are uniformly
distributed in the interval $[0,1]$. Then
$\ell_1^{\infty}(S)=1$, where $\ell_1^{\infty}$  denotes the infinite power of the standard linear Lebesgue measure $\ell_1$  in
$[0,1]$.
\end{lem}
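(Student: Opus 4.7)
The plan is to interpret the coordinate functions on the product space $([0,1]^{\infty},\ell_1^{\infty})$ as an i.i.d.\ sequence of uniformly distributed random variables and then invoke the strong law of large numbers (SLLN) coordinate-by-coordinate for a suitably chosen countable collection of test functions, reducing an uncountable condition (the u.d.\ definition, which quantifies over all subintervals) to a countable one.

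More concretely, first I would fix a countable collection $\mathcal{I}$ of subintervals dense in the set of all subintervals of $[0,1]$, namely $\mathcal{I}=\{[c,d] : c,d\in\mathbb{Q}\cap[0,1],\, c<d\}$. For each $[c,d]\in\mathcal{I}$, define on $[0,1]^{\infty}$ the sequence of random variables $\xi_n^{[c,d]}(\omega)=\chi_{[c,d]}(\omega_n)$ for $\omega=(\omega_n)_{n\in\mathbb{N}}$. Because $\ell_1^{\infty}$ is the infinite product of $\ell_1$, the coordinate projections $\omega\mapsto\omega_n$ form an i.i.d.\ sequence with the uniform law on $[0,1]$; hence, for each fixed $[c,d]$, the variables $\xi_n^{[c,d]}$ are i.i.d.\ Bernoulli with mean $d-c$ and finite variance. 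The Kolmogorov SLLN therefore produces a set $A_{[c,d]}\subset[0,1]^{\infty}$ of full $\ell_1^{\infty}$-measure on which
\begin{equation*}
\lim_{N\to\infty}\frac{1}{N}\sum_{n=1}^{N}\chi_{[c,d]}(\omega_n)=d-c.
\end{equation*}

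Next, I would set $A=\bigcap_{[c,d]\in\mathcal{I}}A_{[c,d]}$; since $\mathcal{I}$ is countable, $\ell_1^{\infty}(A)=1$. It remains to argue that $A\subseteq S$, i.e.\ every $\omega\in A$ is uniformly distributed in $[0,1]$. Given an arbitrary subinterval $[c,d]\subseteq[0,1]$ and $\varepsilon>0$, choose rational subintervals $[c_1,d_1]\subseteq[c,d]\subseteq[c_2,d_2]$ from $\mathcal{I}$ with $(d_2-c_2)-(d_1-c_1)<\varepsilon$; the monotonicity inequality
\begin{equation*}
\frac{\#(\{\omega_1,\dots,\omega_N\}\cap[c_1,d_1])}{N}\le \frac{\#(\{\omega_1,\dots,\omega_N\}\cap[c,d])}{N}\le \frac{\#(\{\omega_1,\dots,\omega_N\}\cap[c_2,d_2])}{N},
\end{equation*}
combined with the limits along rational endpoints, shows that the middle quantity is sandwiched between $d_1-c_1$ and $d_2-c_2$ in the limit, hence converges to $d-c$ (as $\varepsilon$ is arbitrary). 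Thus $\omega\in S$ and $\ell_1^{\infty}(S)\ge\ell_1^{\infty}(A)=1$.

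The only delicate points are (i) verifying independence and identical distribution of the coordinate projections from the product-measure construction, which is standard, and (ii) carefully passing from rational to arbitrary subintervals by the sandwich argument; neither of these presents a genuine obstacle. The essential conceptual content is the translation of a Weyl-type asymptotic condition into an SLLN statement applied to a countable family of bounded Bernoulli variables, so that the exceptional null sets can be united without losing full measure.
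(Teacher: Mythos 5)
Your argument is correct, but note that the paper offers no proof of this lemma at all: it is quoted verbatim from Kuipers--Niederreiter (Theorem 2.2, p.~183), so there is nothing internal to compare against. Your route --- SLLN for the i.i.d.\ Bernoulli variables $\chi_{[c,d]}(\omega_n)$ over the countable family of rational subintervals, followed by the sandwich argument to upgrade from rational to arbitrary endpoints --- is the classical proof and is sound; the reduction of the uncountable quantifier to a countable one is exactly the right idea, and the monotone approximation handles degenerate and boundary cases without difficulty. One caution about how this sits inside \emph{this} paper: the ``Kolmogorov SLLN'' appears here as Theorem 3.2, which is itself deduced from Theorem 3.1, which in turn invokes Lemma 2.1 --- so citing Theorem 3.2 would be circular. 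This is easily repaired: for bounded i.i.d.\ variables the strong law is elementary (a fourth-moment Chebyshev plus Borel--Cantelli argument suffices), or, more economically, you can apply the paper's own Lemma 2.7 with $f=\chi_{[c,d]}$, since the proof of Lemma 2.7 depends only on Lemmas 2.3--2.6 and the Toeplitz lemma and nowhere on Lemma 2.1. With that substitution your proof is self-contained within the paper's machinery and arguably preferable to a bare citation.
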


\begin{lem} (Toeplitz Lemma (\cite{Shiryaev1980}, Lemma 1, p. 377) ) Let $(a_n)_{n \in {\bf N}}$ be a sequence of non-negative numbers,
$b_n=\sum_{i=1}^na_i, b_n>0$ for each $n \ge 1$ and $b_n \uparrow \infty,$  when $n \to \infty$. Let $(x_n)_{n \in {\bf N}}$ be a sequence of
real numbers such that $ \lim_{n \to \infty}x_n=x$. Then
$$\lim_{n \to \infty}\frac{1}{b_n}\sum_{j=1}^na_jx_j=x.$$
In particular, if $a_n=1$ for $n \in {\bf N}$, then
$$
\lim_{n \to \infty}\frac{1}{n}\sum_{k=1}^nx_k=x.
$$
\end{lem}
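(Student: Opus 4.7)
The plan is to recast the statement as a vanishing statement and then exploit a standard ``split at $N$'' argument. The starting observation is that, by definition of $b_n$, one has $\sum_{j=1}^n a_j = b_n$, so $\frac{1}{b_n}\sum_{j=1}^n a_j \cdot x = x$ identically. Subtracting this from the quantity of interest gives
$$
\frac{1}{b_n}\sum_{j=1}^n a_j x_j - x \;=\; \frac{1}{b_n}\sum_{j=1}^n a_j (x_j - x),
$$
and it therefore suffices to show that the right-hand side tends to $0$ as $n \to \infty$.

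Next I would fix an arbitrary $\varepsilon>0$ and, using $x_n \to x$, choose $N=N(\varepsilon)$ such that $|x_j - x|<\varepsilon$ for every $j>N$. Splitting the sum at $N$ yields
$$
\frac{1}{b_n}\sum_{j=1}^n a_j(x_j-x) \;=\; \frac{1}{b_n}\sum_{j=1}^N a_j(x_j-x) \;+\; \frac{1}{b_n}\sum_{j=N+1}^n a_j(x_j-x).
$$
For the second (tail) term, nonnegativity of the $a_j$ gives the bound
$$
\left|\frac{1}{b_n}\sum_{j=N+1}^n a_j(x_j-x)\right| \;\le\; \varepsilon\cdot \frac{b_n - b_N}{b_n} \;\le\; \varepsilon.
$$
For the first (head) term, the numerator $\sum_{j=1}^N a_j(x_j-x)$ is a constant depending on $N$ but not on $n$, while $b_n\uparrow\infty$ by hypothesis, so this term tends to $0$ as $n\to\infty$ with $N$ fixed. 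Hence $\limsup_{n\to\infty}\bigl|\tfrac{1}{b_n}\sum_{j=1}^n a_j x_j - x\bigr|\le \varepsilon$, and letting $\varepsilon\downarrow 0$ concludes the proof. The particular case $a_n\equiv 1$ is obtained by noting that then $b_n = n$.

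There is no serious obstacle: the argument is essentially the classical Ces\`aro/Toeplitz averaging proof. The only point requiring care is that nonnegativity of the weights is what allows the tail bound above to collapse to $\varepsilon\cdot(b_n-b_N)/b_n$; without it one would need absolute values of the $a_j$ and a corresponding boundedness hypothesis on $\sum_{j\le n}|a_j|/b_n$. I would also briefly remark that the hypothesis $b_n\uparrow\infty$ is used \emph{only} for the head term, while positivity of $a_j$ is used \emph{only} for the tail term, so the two hypotheses play complementary roles and neither can be dropped.
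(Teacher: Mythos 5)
Your proof is correct: the reduction to $\frac{1}{b_n}\sum_{j=1}^n a_j(x_j-x)\to 0$, the split at $N$, the tail bound $\varepsilon\,(b_n-b_N)/b_n\le\varepsilon$ using $a_j\ge 0$, and the head term vanishing because $b_n\uparrow\infty$ is exactly the classical Toeplitz argument. The paper itself gives no proof of this lemma --- it is quoted verbatim from Shiryaev --- so there is nothing to compare against beyond noting that your argument is the standard one that the cited source uses.
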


\begin{lem}
 Let $f$ be a Lebesgue integrable  non-negative  real-valued function on $(0,1)$.
  Then the following inequality
    $$
  \int_{(0,1)^{\infty}}(\frac{1}{N}(\sum_{k=1}^N\big(f(x_k) \chi_{ \{x :  f(x)<k \epsilon \} }(x_k) -\int_{ \{x :  f(x)<k \epsilon \}}f(x)dx\big)^2 d
  \ell_1^{\infty}((x_i)_{i \in N}) \le
  $$
  $$
   2 \epsilon \int_0^1f(x)dx
  $$
 holds true.
 \end{lem}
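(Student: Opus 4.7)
The plan is to read the displayed inequality as an $L^{2}(\ell_1^\infty)$--bound on the centered empirical average. For each $k \ge 1$ set $A_k := \{x \in (0,1) : f(x) < k\epsilon\}$ and
\[
 Y_k((x_i)_{i \in {\bf N}}) := f(x_k)\chi_{A_k}(x_k) - \int_{A_k} f(t)\,dt .
\]
Under $\ell_1^\infty$ the coordinates $x_k$ are i.i.d.\ uniform on $(0,1)$, so the $Y_k$ are independent random variables, and each has mean zero since $\int f(x_k)\chi_{A_k}(x_k)\, d\ell_1^\infty = \int_{A_k} f$. This is the entire probabilistic content of the lemma.

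First I would expand the square and use pairwise orthogonality of the $Y_k$ to discard the cross terms:
\[
 \int_{(0,1)^\infty}\Big(\tfrac{1}{N}\sum_{k=1}^N Y_k\Big)^{2} d\ell_1^\infty \;=\; \frac{1}{N^{2}}\sum_{k=1}^N \int Y_k^{2}\, d\ell_1^\infty .
\]
This step is justified by Fubini together with the observation that for $i \ne j$ the functions $Y_i$ and $Y_j$ are measurable with respect to disjoint coordinate $\sigma$--algebras, so $\int Y_iY_j\,d\ell_1^\infty = \bigl(\int Y_i\bigr)\bigl(\int Y_j\bigr) = 0$.

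Next I would bound each second moment using the level-set definition of $A_k$. From $\mathrm{Var}(Z) \le E[Z^{2}]$,
\[
 \int Y_k^{2}\,d\ell_1^\infty \;\le\; \int_{A_k} f(t)^{2}\,dt ,
\]
and on $A_k$ the pointwise inequality $f(t) < k\epsilon$ yields $f(t)^{2} \le k\epsilon\, f(t)$, so $\int_{A_k} f^{2} \le k\epsilon \int_{0}^{1} f$. Summing over $k \le N$ and dividing by $N^{2}$,
\[
 \frac{1}{N^{2}}\sum_{k=1}^N k\epsilon \int_{0}^{1} f(t)\,dt \;=\; \frac{N+1}{2N}\,\epsilon \int_{0}^{1} f(t)\,dt \;\le\; 2\epsilon \int_{0}^{1} f(t)\,dt,
\]
which is the claimed bound (the factor $2$ serving only as comfortable slack).

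The main obstacle is really the orthogonality/independence check in the cross-term elimination; once that is set up via Fubini on the product space $(0,1)^\infty$, the rest is elementary, driven entirely by the key inequality $f(t)^{2} \le k\epsilon\, f(t)$ on $A_k = \{f < k\epsilon\}$, which is the one place where the truncation level $k\epsilon$ pays for itself.
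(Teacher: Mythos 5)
Your proposal is correct, and its first half — viewing the summands as independent, mean-zero functions of disjoint coordinates, killing the cross terms by Fubini, and bounding each second moment by $\int_{A_k}f^2$ — is exactly the opening move of the paper's proof. Where you diverge is in how the resulting quantity $\frac{1}{N^2}\sum_{k=1}^N\int_{A_k}f^2$ is estimated. The paper first replaces $\frac{1}{N^2}$ by $\frac{1}{k^2}$ termwise, extends the sum to infinity, decomposes each $A_k$ into the shells $\{(k-1)\epsilon\le f<k\epsilon\}$, interchanges the order of summation, and uses the tail estimate $\sum_{n\ge k}n^{-2}\le 2/k$ together with $f^2/k\le\epsilon f$ on the $k$-th shell — the machinery one recognizes from the classical Kolmogorov variance-series argument. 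You instead apply the pointwise bound $f^2\le k\epsilon f$ directly on all of $A_k$, getting $\int_{A_k}f^2\le k\epsilon\int_0^1 f$, and then just compute $\frac{1}{N^2}\sum_{k=1}^N k=\frac{N+1}{2N}\le 1$. This is shorter, avoids the interchange of summation entirely, and in fact yields the sharper constant $\epsilon\int_0^1 f$ rather than $2\epsilon\int_0^1 f$; since the bound is still uniform in $N$, it serves the subsequent Lemmas (the Fatou/Levi step for $F_m$) just as well. The only thing the paper's heavier route buys is its resemblance to the standard $\sum k^{-2}\mathrm{Var}$ template; for the statement as given, your argument is a clean improvement.
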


\begin{proof} We have
 $$
  \int_{(0,1)^{\infty}}(\frac{1}{N}\sum_{k=1}^N\big(f(x_k) \chi_{ \{x :  f(x)<k \epsilon \} }(x_k) -\int_{ \{x :  f(x)<k \epsilon \}}f(x)dx\big))^2 d
  \ell_1^{\infty}((x_i)_{i \in N}) \le
  $$

$$
  \frac{1}{N^2} \sum_{k=1}^N \int_0^1f^2(x)\chi_{ \{x :  f(x)<k \epsilon \}}(x)dx \le
  $$

$$
\sum_{k=1}^N \frac{1}{k^2} \int_0^1f^2(x)\chi_{ \{x :  f(x)<k \epsilon \}}(x)dx \le
$$

$$
\sum_{k=1}^{\infty}\frac{1}{k^2} \int_0^1f^2(x)\chi_{ \{x :  f(x)<k \epsilon \}}(x)dx =
$$

$$
\sum_{n=1}^{\infty} \frac{1}{n^2} \sum_{k=1}^n\int_0^1f^2(x)\chi_{ \{ (k-1)\epsilon \le f(x)<k \epsilon \}}(x)dx=
$$

$$
\sum_{k=1}^{\infty}\int_0^1f^2(x)\chi_{ \{x :   (k-1)\epsilon \le f(x)<k \epsilon \}}(x)dx \sum_{n=k}^{\infty}\frac{1}{n^2}\le
$$

$$
2 \sum_{k=1}^{\infty} \frac{1}{k}\int_0^1f^2(x)\chi_{ \{x :  (k-1)\epsilon \le f(x)<k \epsilon \}}(x)dx \le
$$

$$
2 \epsilon \sum_{k=1}^{\infty}\int_0^1f(x)\chi_{ \{x :  (k-1)\epsilon \le f(x)<k \epsilon \}}(x)dx = 2 \epsilon \int_0^1f(x)dx.
$$

\end{proof}

We put
$$
F_m((x_i)_{i \in N})=\overline{\lim}_{N \to \infty}\frac{1}{N^2}(\sum_{k=1}^N\big(f(x_k) \chi_{ \{x :  f(x)<\frac{k}{m} \} }(x_k)
-\int_{  \{x :  f(x)<\frac{k}{m} \} }f(x)dx\big))^2.
$$

The next lemma is a simple consequence of Lemma 2.3.

\begin{lem}
   Let $f$ be a Lebesgue integrable  non-negative  real-valued function on $(0,1)$.
  Then the following inequality
    $$
  \int_{(0,1)^{\infty}}F_m((x_i)_{i \in N})d  \ell_1^{\infty}((x_i)_{i \in N}) \le \frac{2\int_0^1f(x)dx}{m}
  $$
 holds true  for $m \in \mathbf{N}$.\end{lem}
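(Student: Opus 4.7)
The plan is a direct application of Lemma 2.3 with $\epsilon = 1/m$, as suggested by the paper's remark that this is a ``simple consequence.'' Setting
\[ g_N := \frac{1}{N^2}\Big(\sum_{k=1}^N Y_k\Big)^2, \qquad Y_k := f(x_k)\chi_{\{f<k/m\}}(x_k) - \int_{\{f<k/m\}} f(x)\, dx, \]
Lemma 2.3 immediately gives the uniform-in-$N$ bound $\int_{(0,1)^\infty} g_N \, d\ell_1^\infty \le (2/m)\int_0^1 f(x)\, dx$. Since $g_N \ge 0$, Fatou's lemma yields $\int_{(0,1)^\infty} \liminf_N g_N \, d\ell_1^\infty \le \liminf_N \int g_N \, d\ell_1^\infty \le (2/m)\int_0^1 f$, and the claim reduces to passing from $\liminf$ to the $\overline{\lim}_N g_N$ that defines $F_m$.

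The main obstacle is this $\overline{\lim}$-versus-$\liminf$ distinction, since a reverse Fatou would push the inequality in the wrong direction. I would close the gap via the Kolmogorov--Kronecker route: under $\ell_1^\infty$ the $Y_k$ are independent and mean zero, and the telescoping rearrangement used in the proof of Lemma 2.3 actually shows $\sum_{k=1}^\infty \mathrm{Var}(Y_k)/k^2 \le (2/m)\int_0^1 f < \infty$. Kolmogorov's convergence theorem for independent series then yields almost sure convergence of $\sum_k Y_k/k$, and Kronecker's lemma converts this to $S_N/N \to 0$ a.s., where $S_N := \sum_{k=1}^N Y_k$. Consequently $g_N = (S_N/N)^2 \to 0$ a.s., so $F_m = \overline{\lim}_N g_N = 0$ a.s., and the claimed inequality $\int F_m \, d\ell_1^\infty \le (2/m)\int_0^1 f$ is trivially satisfied.

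A fully elementary alternative, if one wishes to bypass the SLLN machinery, is to block the indices $N \in [2^j, 2^{j+1})$ dyadically and apply Kolmogorov's maximal inequality to $S_N$; this bounds $\int \sup_N g_N$ by an absolute constant times $(2/m)\int_0^1 f$, yielding the lemma up to that constant. Either route suffices, but the Kolmogorov--Kronecker argument is the one that recovers the stated constant exactly, so that is the route I would adopt.
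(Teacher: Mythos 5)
Your proof is correct, and it takes a genuinely different (and more careful) route than the paper, which offers no argument for this lemma beyond the remark that it is a ``simple consequence'' of Lemma 2.3. The only simple deduction available from Lemma 2.3 is the uniform-in-$N$ bound $\int_{(0,1)^{\infty}} g_N\,d\ell_1^{\infty}\le \frac{2}{m}\int_0^1 f(x)\,dx$, and passing that bound through $\overline{\lim}_{N}$ is exactly the reverse-Fatou step you flag: since $F_m$ is defined with $\overline{\lim}_N$ rather than $\underline{\lim}_N$, Fatou's lemma points the wrong way, and no integrable dominating function for $(g_N)_N$ is in evidence, so the paper's one-line justification conceals a real gap which your diagnosis correctly identifies. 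Your repair is sound: under $\ell_1^{\infty}$ the $Y_k$ are independent and centred, the middle of the chain of estimates in the proof of Lemma 2.3 is precisely $\sum_{k\ge 1}k^{-2}\int_0^1 f^2\chi_{\{f<k/m\}}\,dx\le \frac{2}{m}\int_0^1 f\,dx$, which dominates $\sum_k \mathrm{Var}(Y_k)/k^2$, and Kolmogorov's convergence theorem for independent series plus Kronecker's lemma give $S_N/N\to 0$ a.s., hence $F_m=0$ $\ell_1^{\infty}$-almost everywhere and the stated inequality holds with room to spare. The trade-off is worth noting: you prove the much stronger statement $F_m\equiv 0$ a.e.\ for every $m$ (which would render Lemma 2.5 and its $F_{m^2}$ subsequence device redundant), but at the cost of importing essentially the full classical $L^2$-series proof of the strong law --- the very machinery the paper is trying to re-derive by other means, since the paper defers its own treatment of the $\overline{\lim}$ to Lemma 2.5 via summability of $\int F_{m^2}$ and Beppo Levi, a device that controls the limit in $m$ but does not by itself justify the $\overline{\lim}$ in $N$ appearing here. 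One small technical point on your sketched alternative: to bound $\int \sup_N g_N\,d\ell_1^{\infty}$ one wants Doob's $L^2$ maximal inequality $E\max_{n\le N}S_n^2\le 4\,E S_N^2$ rather than Kolmogorov's tail maximal inequality, but since you adopt the Kolmogorov--Kronecker route this does not affect your argument.
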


\begin{lem}
 Let $f$ be a Lebesgue integrable  non-negative  real-valued function on $(0,1)$. Then  we have $$
\ell_1^{\infty}(\{ (x_k)_{k \in {\bf N}} : (x_k)_{k \in
{\bf N}} \in [0,1]^{\infty} ~\& ~\lim_{s \to
\infty}F_s((x_i)_{i \in N})=0\})=1.
$$
\end{lem}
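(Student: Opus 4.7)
The plan is to combine the $L^1$-type bound provided by Lemma 2.4 with the first Borel--Cantelli lemma applied along a sufficiently sparse subsequence of indices $m$, and then to bridge from the subsequential almost sure convergence to the full-sequence limit. The driving observation is that $\int F_m\,d\ell_1^\infty \le 2\int_0^1 f(x)\,dx/m \to 0$, so $F_m \to 0$ in $L^1(\ell_1^\infty)$; all that remains is to upgrade this to pointwise convergence on a full-measure set.

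First I would apply Lemma 2.4 to the subsequence $m_j = j^2$. By Tonelli's theorem,
\[
\int_{(0,1)^\infty}\sum_{j=1}^\infty F_{j^2}((x_i))\,d\ell_1^\infty((x_i))
= \sum_{j=1}^\infty \int_{(0,1)^\infty} F_{j^2}\,d\ell_1^\infty
\le 2\int_0^1 f(x)\,dx\cdot\sum_{j=1}^\infty \frac{1}{j^2}<\infty,
\]
so $\sum_j F_{j^2}((x_i))<\infty$ for $\ell_1^\infty$-almost every $(x_i)$, and in particular $F_{j^2}((x_i))\to 0$ almost surely. Equivalently, Markov's inequality gives $\ell_1^\infty(\{F_{j^2}>\epsilon\})\le 2\int f/(j^2\epsilon)$, which is summable in $j$; a countable intersection over $\epsilon\in\{1/k:k\in\mathbf{N}\}$ and Borel--Cantelli produce the same full-measure set. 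Note that the naive choice $m_j=j$ fails, since $\sum 1/j$ diverges, so a sparser (here quadratic) subsequence is essential.

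The main obstacle is then to upgrade from the subsequential limit $F_{j^2}\to 0$ almost surely to the full-sequence limit $\lim_{s\to\infty}F_s=0$ almost surely. Here I would exploit that for each fixed $k$ the truncation set $\{f<k/s\}$ shrinks monotonically in $s$, so both $f(x_k)\chi_{\{f<k/s\}}(x_k)$ and $\int f\chi_{\{f<k/s\}}$ are non-increasing in $s$. The centred differences are not individually monotone, so this does not directly force $F_s$ to be monotone. Instead one would aim to dominate $F_s$ on each block $[j^2,(j+1)^2)$ by $F_{j^2}$ plus a correction coming from the finer truncation levels, and to estimate the supremum of this correction over the block by the Lemma 2.4 bound applied blockwise together with a Borel--Cantelli argument; showing that such a maximal quantity has summable integral across blocks is the technical heart of the proof. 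Once the full-sequence convergence is established on a full-measure set, intersecting with the events of Step 1 yields the conclusion.
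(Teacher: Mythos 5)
Your first step coincides exactly with the paper's: Lemma 2.4 applied along the subsequence $m_j=j^2$, Tonelli (equivalently Beppo Levi, or Markov plus Borel--Cantelli), and the summability of $\sum_j 1/j^2$ give $F_{j^2}\to 0$ on a set of full $\ell_1^{\infty}$-measure. Up to that point the proposal is correct and is the same argument as in the paper.

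The gap is in the second step, which you yourself label ``the technical heart'' and do not carry out. Passing from $F_{m^2}\to 0$ a.e.\ to $\lim_{s\to\infty}F_s=0$ a.e.\ is precisely where the content of the lemma beyond Lemma 2.4 lies; announcing that one ``would aim to dominate $F_s$ on each block by $F_{j^2}$ plus a correction'' whose blockwise maximum has ``summable integral'' is a plan, not a proof, and as written nothing is offered to guarantee that such a maximal estimate holds. Note also that the completion you envisage (a blockwise $L^1$ maximal inequality plus a further Borel--Cantelli argument) is heavier than what is needed: the paper's bridge is purely deterministic and pointwise on the full-measure set already obtained. For $m^2\le s<(m+1)^2$ it uses exactly the monotonicity you observed, namely
$$
\chi_{\{x:\,f(x)<k/s\}}(x_k)\le \chi_{\{x:\,f(x)<k/m^2\}}(x_k)\quad\text{and}\quad \int_{\{x:\,f(x)<k/(m+1)^2\}}f\,dx\le\int_{\{x:\,f(x)<k/s\}}f\,dx\le\int_{\{x:\,f(x)<k/m^2\}}f\,dx,
$$
to write each centred summand of $F_s$ as the corresponding $m^2$-summand plus an error of size at most $\int_{\{x:\,k/(m+1)^2\le f(x)<k/m^2\}}f\,dx$, and then expands the square as $A^2+2|A|\,|B|+B^2$, where the $A^2$ term is $F_{m^2}$ and the cross and $B^2$ terms are controlled by the integrability of $f$ and tend to $0$ as $m\to\infty$; no further measure-theoretic input is required. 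Without this (or an equivalent) estimate actually being performed, your argument establishes only $\lim_j F_{j^2}=0$ almost everywhere, which is strictly weaker than the assertion of the lemma.
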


\begin{proof}
Note that
$$
\sum_{m=1}^{\infty}\int_{(0,1)^{\infty}}F_{m^2}((x_i)_{i \in N})d \lambda^{\infty}((x_i)_{i \in N}) \le
2\int_0^1f(x)dx\sum_{m=1}^{\infty}\frac{1}{m^2}< +\infty.
$$

By Levi well known theorem $(F_{m^2})_{m \in N}$ tends to zero as well $m$ tends to $\infty$ for $\ell_1^{\infty}$-almost everywhere on
$(0,1)^{\infty}$. For $1< s \in \mathbf{N}$ there is $m \ge 1 $ such that $m^2 \le s <(m+1)^2$. Then we get
$$
F_s=\overline{\lim}_{N \to \infty}\frac{1}{N^2}(\sum_{k=1}^N\big(f(x_k) \chi_{  \{x :  f(x)<\frac{k}{s} \} }(x_k)
-\int_{  \{x :  f(x)<\frac{k}{s} \} }f(x)dx))^2
$$
$$
\le \overline{\lim}_{N \to \infty}\frac{1}{N^2}(\sum_{k=1}^N\big(f(x_k) \chi_{  \{x :  f(x)<\frac{k}{m^2} \} }(x_k)
-\int_{  \{x :  f(x)<\frac{k}{(m+1)^2} \} }f(x)dx))^2=
$$
$$
\le \overline{\lim}_{N \to \infty}\frac{1}{N^2}(\sum_{k=1}^N \big(f(x_k) \chi_{  \{x :  f(x)<\frac{k}{m^2} \} }(x_k)
$$
$$
-(\int_{  \{x :  f(x)<\frac{k}{m^2} \} }f(x)dx -\int_{ \{x :  \frac{k}{(m+1)^2} \le f(x)<\frac{k}{m^2} \} }f(x)dx)))^2\le
$$
$$
\le \overline{\lim}_{N \to \infty}\frac{1}{N^2}(\big (\sum_{k=1}^N\big(f(x_k) \chi_{  \{x :  f(x)<\frac{k}{m^2} \} }(x_k)
$$
$$
-\int_{  \{x :  f(x)<\frac{k}{m^2} \} } f(x)dx\big ) + \int_{ \{x :  \frac{k}{(m+1)^2} \le f(x)<\frac{k}{m^2} \} }f(x)dx))^2\le
$$
$$
\le \overline{\lim}_{N \to \infty}\frac{1}{N^2}\big (\sum_{k=1}^N\big(f(x_k) \chi_{ \{x :  f(x)<\frac{k}{m^2} \} }(x_k)
-\int_{ \{x :  f(x)<\frac{k}{m^2} \} }  f(x)dx\big ))^2 +
 $$
 $$
 2\overline{\lim}_{N \to \infty}\frac{1}{N} |\sum_{k=1}^N\big(f(x_k) \chi_{ \{ x : f(x)<\frac{k}{m^2} \} }(x_k)
 $$
 $$
 -\int_{ \{x :  f(x)<\frac{k}{m^2} \} }
 f(x)dx|  \int_{ \{x :  \frac{k}{(m+1)^2} \le f(x)<\frac{k}{m^2} \} }f(x)dx+
 $$
 $$
 \overline{\lim}_{N \to \infty}\frac{1}{N}\big(\int_{ \{x : \frac{k}{(m+1)^2} \le f(x)<\frac{k}{m^2} \} }f(x)dx\big)^2.
$$

Since the right side of  the last equality  tends to  zero when $m$ tends
to $+\infty$(equivalently, $s$ tends to $+\infty$), we end the
proof of Lemma 2.5.

\end{proof}

\begin{lem}
 Let $f$ be a Lebesgue integrable  non-negative  real-valued function on $(0,1)$.  Then the
following equality
    $$
 \lambda^{\infty}( \cup_{n=1}^{\infty}(0,1)^n \times \prod_{k \ge n} \{ x: f(x) < k \epsilon \})=1
  $$
  holds true.
  \end{lem}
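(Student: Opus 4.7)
The plan is to recognize the set in question as the complement of a $\limsup$ event and reduce to the first Borel--Cantelli lemma applied to the product measure $\ell_1^{\infty}$. Set
$$E_k=\{(x_i)_{i\in\mathbf{N}}\in(0,1)^{\infty}:f(x_k)\ge k\epsilon\}\qquad(k\in\mathbf{N}).$$
A sequence $(x_i)_{i\in\mathbf{N}}$ belongs to some cylinder $(0,1)^n\times\prod_{k\ge n}\{x:f(x)<k\epsilon\}$ precisely when $f(x_k)<k\epsilon$ holds for all sufficiently large $k$, so the displayed union equals $(0,1)^{\infty}\setminus\limsup_{k\to\infty}E_k$. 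Hence the claim reduces to showing $\ell_1^{\infty}(\limsup_{k\to\infty}E_k)=0$.

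Next I would verify the summability hypothesis of Borel--Cantelli. Each $E_k$ is a one--coordinate cylinder, so $\ell_1^{\infty}(E_k)=\ell_1(\{x\in(0,1):f(x)\ge k\epsilon\})$. Since $f\ge0$ is integrable on $(0,1)$, the standard layer--cake identity and monotonicity of the distribution function $t\mapsto\ell_1(\{f\ge t\})$ yield
$$\sum_{k=1}^{\infty}\ell_1(\{f\ge k\epsilon\})\le\frac{1}{\epsilon}\int_0^{\infty}\ell_1(\{f\ge t\})\,dt=\frac{1}{\epsilon}\int_0^1 f(x)\,dx<+\infty.$$

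The first Borel--Cantelli lemma (which needs only countable subadditivity, not independence) then gives $\ell_1^{\infty}(\limsup_{k\to\infty}E_k)=0$, and the complement has full $\ell_1^{\infty}$--measure, as required. There is no substantive obstacle here; the only point requiring minor care is the one--step index shift between the cylinder description $(0,1)^n\times\prod_{k\ge n}\{f<k\epsilon\}$ and the event ``$f(x_k)<k\epsilon$ eventually,'' which is harmless since the $\limsup$ is unaffected by finitely many indices.
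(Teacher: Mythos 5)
Your proof is correct, and it reaches the conclusion by a slightly different mechanism than the paper. Both arguments hinge on the same key estimate — that integrability of $f\ge 0$ forces $\sum_{k}\ell_1(\{x:f(x)\ge k\epsilon\})<\infty$ (your layer--cake bound $\le\frac{1}{\epsilon}\int_0^1 f$ is a cleaner version of the paper's bound $\le\int_0^1 f+1$). Where you diverge is in how that summability is converted into the measure statement: the paper exploits the product structure of $\ell_1^{\infty}$ to evaluate the measure of each tail cylinder $\prod_{k\ge n}\{f<k\epsilon\}$ as an infinite product $\prod_{k\ge n}(1-\ell_1(\{f\ge k\epsilon\}))$ and shows via a logarithm/exponential manipulation that this product tends to $1$ as $n\to\infty$; you instead pass to the complement, recognize it as $\limsup_k E_k$ for the one--coordinate cylinders $E_k=\{f(x_k)\ge k\epsilon\}$, and invoke the first Borel--Cantelli lemma. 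Your route is marginally more economical: it needs only countable subadditivity rather than the full independence of coordinates, and it avoids the infinite--product computation (which in the paper is written with a sign slip in the exponent, harmless only because the limit there is $0$). You are also right that the off--by--one between the cylinder indexing $(0,1)^n\times\prod_{k\ge n}\{f<k\epsilon\}$ and the event ``$f(x_k)<k\epsilon$ eventually'' is immaterial, since shifting the index by one preserves both the $\limsup$ and the summability.
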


\begin{proof} Since $f$ is Lebesgue integrable we have
$$
\sum_{k=1}^{\infty}\ell_1 (\{ x: f(x) \ge   k \epsilon \})=$$
$$
\sum_{k=1}^{\infty}\ell_1(\{ x : f(x) > k \epsilon \})+ \sum_{k=1}^{\infty}\ell_1(\{ x : f(x) = k \epsilon \}) \le  \int_0^1f(x)dx +1.
$$

The last relation means that $\lim_{n \to \infty} \sum_{k=n}^{\infty}\ell_1 (\{ x: f(x) \ge  k \epsilon \})=0$.
Take into account this fact, we get
$$\lim_{n \to \infty}\prod_{k \ge n}\ell_1 (\{ x: f(x) <  k \epsilon \})=\lim_{n \to \infty}\prod_{k \ge n}(1-\ell_1 (\{ x: f(x) \ge  k \epsilon \}))=
$$
$$
e^{\lim_{n \to \infty}\sum_{k \ge n}\ln (1-\ell_1 (\{ x: f(x) \ge  k \epsilon \}))}=e^{\lim_{n \to \infty}\sum_{k \ge n}\ell_1 (\{ x: f(x) \ge  k \epsilon \})}=1.
$$
Hence, for $m \in N $ there is such a natural number $N(m)$  that
$\prod_{k \ge N(m)} \ell_1 (\{ x: f(x) <  k \epsilon \})>1-\frac{1}{m}$.

Now it is obvious that
$$
 \lambda^{\infty}( \cup_{n=1}^{\infty}(0,1)^n \times \prod_{k \ge n} \{ x: f(x) <  k \epsilon \})\ge
 $$
$$
 \lambda^{\infty}( \cup_{m=1}^{\infty}(0,1)^{N(m)-1}\times \prod_{k \ge N(m)}  \{ x: f(x) <  k \epsilon \})=1.
 $$

\end{proof}

\begin{lem}
 Let $f$ be a Lebesgue integrable  non-negative  real-valued function on $(0,1)$.
Then  we have $$
\ell_1^{\infty}(\{ (x_k)_{k \in {\bf N}} : (x_k)_{k \in
{\bf N}} \in [0,1]^{\infty} ~\& ~
\lim_{N \to \infty}\frac{1}{N}\sum_{k=1}^Nf(x_k)=\int_{0}^1f(x)dx
\})=1.
$$
\end{lem}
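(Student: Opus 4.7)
The plan is to combine the second-moment estimates of Lemmas~2.3--2.5 with the ``tail occurs only finitely often'' statement of Lemma~2.6 and a Toeplitz-type averaging. First, fix $m \in \mathbf{N}$ and introduce the truncated variables
$$
Y_k^{(m)} = f(x_k)\chi_{\{x:\, f(x)<k/m^2\}}(x_k), \qquad \mu_k^{(m)} = \int_{\{x:\, f(x)<k/m^2\}} f(x)\,dx.
$$
By Lemma~2.5 the set $A_1$ of sequences $(x_k)\in[0,1]^\infty$ on which $F_s((x_k)_{k\in\mathbf{N}})\to 0$ as $s\to\infty$ has full $\ell_1^\infty$-measure. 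By Lemma~2.6 applied with $\epsilon = 1/m^2$ for each $m$, together with a countable intersection over $m$, the set $A_2$ of sequences on which, for every $m\in\mathbf{N}$, the inequality $f(x_k)<k/m^2$ holds for all but finitely many $k$ also has full measure; hence so does $A=A_1\cap A_2$.

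Next, for $(x_k)\in A$ and a fixed $m$, I would use the decomposition
$$
\frac{1}{N}\sum_{k=1}^N f(x_k)=\frac{1}{N}\sum_{k=1}^N\bigl(Y_k^{(m)}-\mu_k^{(m)}\bigr)+\frac{1}{N}\sum_{k=1}^N\mu_k^{(m)}+\frac{1}{N}\sum_{k=1}^N\bigl(f(x_k)-Y_k^{(m)}\bigr).
$$
The third sum tends to $0$ because on $A_2$ only finitely many summands are nonzero, so the numerator is a fixed finite number. The second sum tends to $\int_0^1 f(x)\,dx$ by the Toeplitz Lemma~2.2, since $\mu_k^{(m)}\to\int_0^1 f(x)\,dx$ as $k\to\infty$ by monotone convergence applied to $f\chi_{\{f<k/m^2\}}$. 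Finally, the absolute value of the first sum has $\limsup_N$ equal to $\sqrt{F_{m^2}((x_k)_{k\in\mathbf{N}})}$ by the very definition of $F_{m^2}$ (together with continuity and monotonicity of $t\mapsto t^2$ on $[0,\infty)$).

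Putting the three pieces together yields, for every $(x_k)\in A$ and every $m\in\mathbf{N}$,
$$
\limsup_{N\to\infty}\Bigl|\frac{1}{N}\sum_{k=1}^N f(x_k)-\int_0^1 f(x)\,dx\Bigr|\le \sqrt{F_{m^2}((x_k)_{k\in\mathbf{N}})},
$$
and letting $m\to\infty$ forces the right-hand side to zero because $(x_k)\in A_1$. Since $\ell_1^\infty(A)=1$, this proves the claim. I expect the delicate point to be verifying that the ``tail'' term $\frac{1}{N}\sum_{k=1}^N f(x_k)\chi_{\{x:\, f(x)\ge k/m^2\}}(x_k)$ really is negligible, which is precisely the content of Lemma~2.6 and the reason that lemma was established; everything else (monotone convergence for $\mu_k^{(m)}$, Toeplitz averaging, and reading off $F_{m^2}$ as the relevant $\limsup$) is routine.
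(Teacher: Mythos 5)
Your proof is correct and follows essentially the same route as the paper: the paper's proof of this lemma likewise intersects the full-measure set $D$ from Lemma~2.5 (where $F_s\to 0$) with the full-measure set $E$ built from Lemma~2.6 (where the tail terms $f(x_k)\ge k/s$ occur only finitely often), splits the average into the centered truncated sum controlled by $\sqrt{F_s}$, the deterministic means handled by the Toeplitz lemma, and the finitely-supported tail. The only cosmetic difference is that you parametrize by $m$ and let $m\to\infty$ at the end, whereas the paper fixes $\epsilon>0$ and chooses $s\ge s_0$ with $\sqrt{F_s}<\epsilon$.
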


\begin{proof}  We put $$D=\{ (x_k)_{k \in {\bf N}} : (x_k)_{k \in
{\bf N}} \in [0,1]^{\infty} ~\& ~\lim_{s \to
\infty}F_s((x_i)_{i \in N})=0\}
$$
and

$$E=\cap_{s=1}^{\infty} \cup_{n=1}^{\infty}(0,1)^n \times \prod_{k \ge n}\{ x: f(x) <  \frac{k}{s} \}
$$
By using Lemmas 2.5-2.6, we deduce that $\ell_1^{\infty}(D \cap E)=1$.

Let  $(x_k)_{k \in
{\bf N}} \in  D \cap E$.

Since $(x_k)_{k \in
{\bf N}} \in  D$, we have  $\lim_{s \to
\infty}\sqrt{F_s((x_i)_{i \in N})}=0$.
The latter relation means that for $\epsilon>0$ there is $s_0$ such that
$\sqrt{F_s((x_i)_{i \in N})}<\epsilon$ for $s\ge s_0$, equivalently,
$$
\overline{\lim}_{N \to \infty}\frac{1}{N}|\sum_{k=1}^N\big(f(x_k) \chi_{ \{x : f(x)<\frac{k}{s} \} }(x_k) -\int_{ \{x: f(x)<\frac{k}{s} \} }f(x)dx|<
\epsilon
$$
for $s\ge s_0$.
Since $(x_k)_{k \in
{\bf N}} \in  E$, there is $s_1$ such that $f(x_k)<\frac{k}{s}$ for each $k \ge s_1$.

This means that
$$
\overline{\lim}_{N \to \infty}\frac{1}{N}|\sum_{k=1}^N\big(f(x_k) \chi_{ \{x : f(x)<\frac{k}{s} \} }(x_k) -\int_{ \{x: f(x)<\frac{k}{s} \} }f(x)dx|=
$$
$$
\overline{\lim}_{N \to \infty}\frac{1}{N}|\sum_{k=1}^{s_1}\big(f(x_k) \chi_{ \{x : f(x)<\frac{k}{s} \} }(x_k)
-\int_{ \{x : f(x)<\frac{k}{s} \} }f(x)dx)+
$$
$$
\sum_{k=s_1+1}^N\big(f(x_k) \chi_{ \{x: f(x)<\frac{k}{s} \} }(x_k) -\int_{ \{x : f(x)<\frac{k}{s} \} }f(x)dx)|=
$$
$$
\overline{\lim}_{N \to \infty}\frac{1}{N}|\sum_{k=1}^{s_1}\big(f(x_k) \chi_{ \{x: f(x)<\frac{k}{s} \} }(x_k)
-\int_{ \{x : f(x)<\frac{k}{s} \} }f(x)dx)+$$
$$
+\sum_{k=1}^{s_1}\big(f(x_k) -\int_{ \{x : f(x)<\frac{k}{s} \} }f(x)dx)+
$$
$$
\sum_{k=s_1+1}^N\big(f(x_k) \chi_{ \{x: f(x)<\frac{k}{s} \} }(x_k) -\int_{ \{x : f(x)<\frac{k}{s} \} }f(x)dx)|=
$$
$$
\overline{\lim}_{N \to \infty}\frac{1}{N}|
\sum_{k=1}^{s_1}\big(f(x_k) -\int_{ \{x : f(x)<\frac{k}{s} \} }f(x)dx)+$$
$$\sum_{k=s_1+1}^N\big(f(x_k) \chi_{ \{x: f(x)<\frac{k}{s} \} }(x_k) -\int_{ \{x : f(x)<\frac{k}{s} \} }f(x)dx)|=
$$
$$
\overline{\lim}_{N \to \infty}\frac{1}{N}|
\sum_{k=1}^N\big(f(x_k) -\int_{ \{x : f(x)<\frac{k}{s} \} }f(x)dx)|<\epsilon.
$$
Since $\epsilon$ was taken arbitrary and
$$
\overline{\lim}_{N \to \infty}\frac{1}{N}|
\sum_{k=1}^N\big(f(x_k) -\int_{ \{x : f(x)<\frac{k}{s} \} }f(x)dx)|<\epsilon,
$$
we deduce that
$$
\overline{\lim}_{N \to \infty}\frac{1}{N}|
\sum_{k=1}^N\big(f(x_k) -\int_{ \{x : f(x)<\frac{k}{s} \} }f(x)dx)|=0,
$$
which means that
$$
\lim_{N \to \infty}\frac{1}{N}
\sum_{k=1}^N (f(x_k) -\int_{ \{x : f(x)<\frac{k}{s} \} }f(x)dx)=0.
$$
Since $\lim_{k \to \infty}\int_{ \{x : f(x)<\frac{k}{s} \} }f(x)dx=\int_{0}^1f(x)dx$, by Toeplitz lemma we deduce that
$$
\lim_{N \to \infty}\frac{1}{N}
\sum_{k=1}^N\int_{ \{x : f(x)<\frac{k}{s} \} }f(x)dx)=\int_{0}^1f(x)dx.
$$
The latter relation implies that
$$
\lim_{N \to \infty}\frac{1}{N}\sum_{k=1}^Nf(x_k)=
\lim_{N \to \infty}\frac{1}{N}
\sum_{k=1}^N\int_{ \{x : f(x)<\frac{k}{s} \} }f(x)dx=\int_{0}^1f(x)dx.
$$
This ends the proof of Lemma 2.7.
\end{proof}

\begin{rem} Formulation of Lemma 2.4(cf. \cite{Pant2015}, p.285) needs a certain specification. More precisely, it should be  formulated for  sequences $(x_k)_{k \in N} \in S \cap E \cap D $, where $S$ comes from Lemma 2.1 and,   $E$ and $D$ come from the Lemma 2.7.  Since $\ell_1(S \cap E \cap D)=1$, such reformulated  Lemma 2.4 can be used for the proof of Corollary 4.2(cf. p. 296).
\end{rem}

\section{Main Results}

By using Lemmas  2.1 and 2.7, we get

\begin{thm}
 Let $f$ be a Lebesgue integrable  real-valued function on $(0,1)$.
Then  we have $$
\ell_1^{\infty}(\{ (x_k)_{k \in {\bf N}} : (x_k)_{k \in
{\bf N}} \in [0,1]^{\infty} ~\&
$$
$$(x_k)_{k \in
{\bf N}} ~\mbox{is~uniformly~distributed~in~}(0,1)~\&~
\lim_{N \to \infty}\frac{1}{N}\sum_{k=1}^Nf(x_k)=\int_{0}^1f(x)dx
\})=1.
$$
\end{thm}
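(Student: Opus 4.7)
The plan is to reduce the claim to the non-negative case already handled by Lemma 2.7, then intersect with the full-measure set of uniformly distributed sequences provided by Lemma 2.1.

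First I would decompose $f = f^+ - f^-$, where $f^+ = \max(f,0)$ and $f^- = \max(-f,0)$ are both non-negative and Lebesgue integrable on $(0,1)$, with $\int_0^1 f(x)\,dx = \int_0^1 f^+(x)\,dx - \int_0^1 f^-(x)\,dx$. Applying Lemma 2.7 to $f^+$ yields a set $A^+ \subset [0,1]^\infty$ with $\ell_1^\infty(A^+) = 1$ on which $\frac{1}{N}\sum_{k=1}^N f^+(x_k) \to \int_0^1 f^+(x)\,dx$, and analogously Lemma 2.7 applied to $f^-$ produces a full-measure set $A^- \subset [0,1]^\infty$ on which the corresponding averages converge to $\int_0^1 f^-(x)\,dx$.

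Next, let $S \subset [0,1]^\infty$ denote the set of uniformly distributed sequences, which has $\ell_1^\infty(S) = 1$ by Lemma 2.1. Since the infinite-power Lebesgue measure $\ell_1^\infty$ is a probability measure and the intersection of three sets of full measure again has full measure, $\ell_1^\infty(S \cap A^+ \cap A^-) = 1$. For any sequence $(x_k)_{k \in \mathbf{N}} \in S \cap A^+ \cap A^-$, the sequence is uniformly distributed in $(0,1)$ and, by linearity of the limit,
$$
\lim_{N \to \infty}\frac{1}{N}\sum_{k=1}^N f(x_k) = \lim_{N \to \infty}\frac{1}{N}\sum_{k=1}^N f^+(x_k) - \lim_{N \to \infty}\frac{1}{N}\sum_{k=1}^N f^-(x_k) = \int_0^1 f^+(x)\,dx - \int_0^1 f^-(x)\,dx = \int_0^1 f(x)\,dx,
$$
so the target set contains $S \cap A^+ \cap A^-$ and hence has full $\ell_1^\infty$-measure.

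There is no serious obstacle here: the theorem is essentially a corollary of the previous lemmas, and the only small point to verify is that the positive and negative parts of a Lebesgue integrable function are themselves Lebesgue integrable and non-negative, so Lemma 2.7 applies to each. The slight subtlety worth emphasizing is that intersecting with $S$ is what delivers uniform distribution of the selected sequences, while the two copies of Lemma 2.7 deliver the Lebesgue integral convergence; no further estimate is needed.
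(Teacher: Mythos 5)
Your proposal is correct and follows essentially the same route as the paper: decompose $f$ into its positive and negative parts, apply Lemma 2.7 to each non-negative piece to get two full-measure sets, intersect with the set $S$ of uniformly distributed sequences from Lemma 2.1, and conclude by linearity of the limit. The only cosmetic difference is the sign convention (the paper writes $f=f^{+}+f^{-}$ with $f^{-}=\inf\{f,0\}$ and applies Lemma 2.7 to $-f^{-}$, whereas you use $f=f^{+}-f^{-}$ with both parts non-negative), which does not change the argument.
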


\begin{proof} Note that $f=f^{+}+f^{-}$, where
$f^{+}(x)=\sup\{f(x),0\}$ and  $f^{-}(x)=\inf\{f(x),0\}$ for $x \in (0,1)$. Clearly, $f^{+}$ satisfies the condition of Lemma 2.7.
We put
$$
D_1=\big \{ (x_k)_{k \in {\bf N}} : (x_k)_{k \in
{\bf N}} \in [0,1]^{\infty} ~\& ~
\lim_{N \to \infty}\frac{1}{N}\sum_{k=1}^Nf^{+}(x_k)=\int_{0}^1f^{+}(x)dx \big \}.
$$
By Lemma 2.7 we get $\ell_1^{\infty}(D_1)=1$.

Note that $-f^{-}$ also satisfies the condition of Lemma 2.7.
We put
$$
D_2=\big \{ (x_k)_{k \in {\bf N}} : (x_k)_{k \in
{\bf N}} \in [0,1]^{\infty} ~\& ~
\lim_{N \to \infty}\frac{1}{N}\sum_{k=1}^N(-f^{-})(x_k)=-\int_{0}^1(-f^{-})(x)dx \big \}.
$$
By Lemma 2.7 we get $\ell_1^{\infty}(D_2)=1$.

It is obvious that $\ell_1^{\infty}(D_1 \cap D_2 \cap S)=1$, where $S$ comes from Lemma 2.1. For  $(x_k)_{k \in {\bf N}} \in  D_1 \cap D_2 \cap S$ we get
$$
\lim_{N \to \infty}\frac{1}{N}\sum_{k=1}^Nf(x_k)=\lim_{N \to \infty}\frac{1}{N}\sum_{k=1}^N\big( f^{+}-(-f^{-})\big )(x_k)=
$$
$$
\lim_{N \to \infty}\frac{1}{N}\sum_{k=1}^N f^{+}(x_k)- \lim_{N \to \infty}\frac{1}{N}\sum_{k=1}^N(-f^{-})(x_k)=
$$
$$
\int_{0}^1f^{+}(x)dx-\int_{0}^1(-f^{-})(x)dx=\int_{0}^1\big(f^{+}(x)+f^{-}(x)\big )dx=\int_{0}^1f(x)dx.
$$

\end{proof}

Now we present our proof of the  Kolmogorov strong law of large numbers(see, \cite{Shiryaev1980}, Theorem 3(Kolmogorov),
p.
379).

\begin{thm} Let $(\xi_k)_{k \in ~{N}}$ be a sequence of independent identically distributed  random variables with distribution function $F$ for which $M|\xi_1|<\infty.$ Then the condition
 $$
P\{ \omega: \lim_{n \to \infty}\frac{\sum_{k=1}^n\xi_k(\omega)}{n}=m\}=1 \eqno (17)
$$
holds true, where $m=M(\xi_1)$.
\end{thm}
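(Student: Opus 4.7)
The plan is to deduce Kolmogorov's theorem from Theorem 3.1 via the quantile transform, so that the probabilistic statement becomes a statement about $\ell_1^\infty$-almost all sequences in $(0,1)^\infty$. Let $F$ denote the common distribution function of the $\xi_k$, and define $f:(0,1)\to \mathbb{R}$ by the generalized inverse $f(u)=F^{-1}(u)=\inf\{t\in\mathbb{R}:F(t)\ge u\}$. The standard change-of-variables identity for the quantile transform gives $\int_0^1 |f(u)|\,du=M|\xi_1|<\infty$ and $\int_0^1 f(u)\,du=M(\xi_1)=m$, so $f$ is a Lebesgue integrable real-valued function on $(0,1)$ and Theorem 3.1 is applicable to it.

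First I would apply Theorem 3.1 to the function $f$ above. It yields
$$
\ell_1^\infty(A)=1,\qquad A:=\Bigl\{(u_k)_{k\in\mathbb{N}}\in(0,1)^\infty:\lim_{N\to\infty}\frac{1}{N}\sum_{k=1}^N f(u_k)=m\Bigr\}.
$$
Now introduce, on some auxiliary probability space, an iid sequence $(U_k)_{k\in\mathbb{N}}$ of $\mathrm{Uniform}(0,1)$ random variables; the joint distribution of $(U_k)$ on $(0,1)^\infty$ is exactly $\ell_1^\infty$, so $P((U_k)\in A)=\ell_1^\infty(A)=1$. Setting $\eta_k:=f(U_k)$, we conclude that $\frac{1}{N}\sum_{k=1}^N\eta_k\to m$ almost surely. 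Moreover, the quantile transform guarantees that $(\eta_k)$ is iid with common distribution function $F$, hence has the same joint distribution as $(\xi_k)$.

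The only remaining obstacle, and the delicate step, is that the original probability space carrying $(\xi_k)$ need not equal $(0,1)^\infty$ with $\ell_1^\infty$. To bridge this gap, I would observe that the set
$$
B:=\Bigl\{(x_k)_{k\in\mathbb{N}}\in\mathbb{R}^\infty:\lim_{N\to\infty}\frac{1}{N}\sum_{k=1}^N x_k=m\Bigr\}
$$
is Borel in $\mathbb{R}^\infty$, so its probability depends only on the joint distribution of the coordinate sequence. Since $(\xi_k)$ and $(\eta_k)$ share the law $F^{\infty}$, one concludes $P((\xi_k)\in B)=P((\eta_k)\in B)=1$, which is precisely (17). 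Thus the whole proof reduces, essentially for free, to the already-established Theorem 3.1, the quantile representation, and measurability of the tail event $B$; all of the analytic work was already absorbed into Lemmas 2.3--2.7 and Theorem 3.1.
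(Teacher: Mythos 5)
Your proposal is correct and follows essentially the same route as the paper: both reduce Kolmogorov's theorem to Theorem 3.1 by means of the quantile (generalized inverse) transform $f=F^{-1}$, whose integral over $(0,1)$ equals $m$. The only difference is that where the paper simply declares ``without loss of generality'' that the underlying probability space may be taken to be $((0,1)^{\infty},\mathcal{B}((0,1)^{\infty}),\ell_1^{\infty})$, you justify that reduction explicitly via the Borel set $B\subset\mathbb{R}^{\infty}$ and equality of the laws of $(\xi_k)$ and $(f(U_k))$ --- a welcome filling-in of a step the paper leaves implicit.
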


\begin{proof}  Without loss of generality we can assume that
$$
(\Omega,\mathcal{F},P)=((0,1)^{\infty}, \mathcal{B}((0,1)^{\infty}),\ell_1^{\infty}). \eqno (4.9)
$$

Let $(c_k)_{k \in N}$ be a set of all points of jumps of $F$. We denote by $d_k$ the jump of a function $F$ at point $c_k$ for each $k \in N$.

Let $f:(0,1) \to R$ be
defined by
$$
f(x)= \sum_{k \in N} c_kI_{[F(c_k)-d_k,F(c_k)[}(x) + \sup \{ y : F(y)=x \} I_{(0,1) \setminus \cup_{k \in N} [F(c_k)-d_k,F(c_k)[}(x)\eqno (4.11)
$$
for each $x \in (0,1)$.

Note that $f$ is a Lebesgue integrable real-valued function on $(0,1)$ such that
  $$
 \int_{0}^1f(x)dx=\int_{-\infty}^{+\infty}x d F(x)=m.
$$
An application of Theorem 3.1 ends the proof of Theorem 3.2.
\end{proof}
\begin{rem}
Let $f:(0,1)\to {\bf R}$ be a Lebesgue integrable function.  By Theorem 3.1 we have
$\ell_1^{\infty}(A_f)=1$, where
$$
A_f=\{ (x_k)_{k \in {\bf N}} : (x_k)_{k \in
{\bf N}} \in (0,1)^{\infty} ~\& ~\lim_{N \to
\infty}\frac{1}{N}\sum_{n=1}^Nf(x_n)=\int_{(0,1)}
f(x)dx\}.
$$
\end{rem}
We have the following simple consequences of Theorem 3.1.

\begin{cor}Let $f:(0,1)\to {\bf R}$ be Lebesgue integrable function.  Then   we have
$\ell_1^{\infty}(B_f)=1$, where
$$
B_f=\{ (x_k)_{k \in {\bf N}} : (x_k)_{k \in
{\bf N}} \in (0,1)^{\infty} ~\& ~\lim_{N \to
\infty}\frac{1}{N}\sum_{k=1}^Nf(x_k)~\mbox{exists} \}.
$$\end{cor}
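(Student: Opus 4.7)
The plan is to observe that $B_f$ is a weakening of the set $A_f$ appearing in Remark 3.1: membership in $A_f$ requires both that the Cesàro-type average $\frac{1}{N}\sum_{k=1}^{N} f(x_k)$ converge and that its limit equal $\int_{(0,1)} f(x)\,dx$, whereas membership in $B_f$ only requires that the limit exist (its value being unconstrained). Thus any sequence $(x_k)_{k \in \mathbf{N}}$ that lies in $A_f$ automatically lies in $B_f$.

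The first step is therefore to write down the set inclusion $A_f \subseteq B_f$ explicitly, which is immediate from the definitions. Next I would invoke Theorem 3.1, applied to the given Lebesgue integrable function $f$, to conclude that $\ell_1^{\infty}(A_f) = 1$; this is precisely the content of Remark 3.1. Monotonicity of the measure $\ell_1^{\infty}$ then gives
\[
1 = \ell_1^{\infty}(A_f) \le \ell_1^{\infty}(B_f) \le 1,
\]
so $\ell_1^{\infty}(B_f) = 1$, which is exactly the claim.

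There is essentially no obstacle here: the entire content of the corollary is packaged inside Theorem 3.1, and the only observation needed is that ``existence of the limit'' is a weaker property than ``existence of the limit plus identification with the integral''. One small bookkeeping point to mention is that $B_f$ is genuinely $\ell_1^{\infty}$-measurable (so that talking about its measure makes sense): this follows because $B_f$ can be written as the set where the $\limsup$ and $\liminf$ of $\frac{1}{N}\sum_{k=1}^{N} f(x_k)$ coincide and are finite, each of which is a Borel function of $(x_k)_{k\in \mathbf{N}}$ as a pointwise limit of Borel functions. Apart from this routine measurability remark, the proof is a single line of set inclusion followed by an appeal to Theorem 3.1.
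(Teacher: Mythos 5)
Your proposal is correct and follows exactly the paper's own argument: the inclusion $A_f \subseteq B_f$ combined with $\ell_1^{\infty}(A_f)=1$ from Remark 3.1 (i.e.\ Theorem 3.1) and monotonicity of the measure. The added remark on measurability of $B_f$ is a reasonable bit of extra care but does not change the route.
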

\begin{proof}  Since $A_f \subseteq B_f$, by Remark 3.1 we get
$$
1 =\ell_1(A_f)\le \ell_1(B_f)\le \ell_1((0,1)^{\infty})=1.
$$

\end{proof}

\begin{cor} Let $f:(0,1)\to {\bf R}$ be Lebesgue integrable function.  Then   we have
$\ell_1^{\infty}(C_f)=1$, where
$$
C_f=\{ (x_k)_{k \in {\bf N}} : (x_k)_{k \in
{\bf N}} \in (0,1)^{\infty} ~\& ~\lim_{N \to
\infty}\frac{f(x_N)}{N}=0\}.
$$
\end{cor}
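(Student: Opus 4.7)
The plan is to deduce Corollary~3.2 directly from Corollary~3.1 via the elementary observation that Cesàro-summable sequences have terms of sublinear growth. Specifically, I would show the inclusion $B_f \subseteq C_f$ and then invoke the fact that $\ell_1^{\infty}(B_f) = 1$ already established in Corollary~3.1.

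To carry this out, fix $(x_k)_{k \in \mathbf{N}} \in B_f$ and set $S_N = \sum_{k=1}^N f(x_k)$, so by definition $L := \lim_{N \to \infty} S_N/N$ exists in $\mathbf{R}$. The key identity is
\begin{equation*}
\frac{f(x_N)}{N} \;=\; \frac{S_N - S_{N-1}}{N} \;=\; \frac{S_N}{N} \;-\; \frac{N-1}{N}\cdot\frac{S_{N-1}}{N-1},
\end{equation*}
valid for all $N \geq 2$. Since $S_N/N \to L$, $S_{N-1}/(N-1) \to L$, and $(N-1)/N \to 1$, the right-hand side tends to $L - 1\cdot L = 0$. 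Hence $(x_k)_{k \in \mathbf{N}} \in C_f$, proving $B_f \subseteq C_f$.

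Combining this inclusion with Corollary~3.1 yields
\begin{equation*}
1 \;=\; \ell_1^{\infty}(B_f) \;\leq\; \ell_1^{\infty}(C_f) \;\leq\; \ell_1^{\infty}((0,1)^{\infty}) \;=\; 1,
\end{equation*}
whence $\ell_1^{\infty}(C_f)=1$.

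There is no substantive obstacle in this argument; the entire content lies in the telescoping identity $f(x_N) = S_N - S_{N-1}$, which is the standard way that Cesàro convergence forces $a_N/N \to 0$. The only point to be careful about is that the identity requires $N \geq 2$, but since the convergence $f(x_N)/N \to 0$ is a tail statement, this causes no issue.
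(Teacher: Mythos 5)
Your proof is correct and follows essentially the same route as the paper: the telescoping identity $f(x_N)=S_N-S_{N-1}$ combined with convergence of $S_N/N$. The only cosmetic difference is that you establish the inclusion $B_f\subseteq C_f$ (using only existence of the Ces\`aro limit, via Corollary~3.1), whereas the paper proves $A_f\subseteq C_f$ and cites Remark~3.1; since $A_f\subseteq B_f$, your version is marginally more general but the argument is identical.
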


\begin{proof} Note that $A_f \subseteq C_f$. Indeed, let $(x_k)_{k \in {\bf N}} \in A_f $. Then we get
$$
\lim_{N \to \infty}\frac{f(x_N)}{N}= \lim_{N \to \infty}\frac{1}{N}(\sum_{k=1}^Nf(x_k)-\sum_{k=1}^{N-1}f(x_k))=
$$
$$
\lim_{N \to \infty}\frac{1}{N}\sum_{k=1}^Nf(x_k)-  \lim_{N \to \infty}\frac{1}{N}\sum_{k=1}^{N-1}f(x_k)=
$$
$$
\lim_{N \to \infty}\frac{1}{N}\sum_{k=1}^Nf(x_k)- \lim_{N-1 \to \infty}\frac{N-1}{N}(\frac{1}{N-1}\sum_{k=1}^{N-1}f(x_k))=
$$
$$
\lim_{N \to \infty}\frac{1}{N}\sum_{k=1}^Nf(x_k)- \lim_{N-1 \to \infty}\frac{1}{N-1}\sum_{k=1}^{N-1}f(x_k)=0.
$$
By Remark 3.1  we know that $\ell_1^{\infty}(A_f)=1$ which implies $1=\ell_1^{\infty}(A_f) \le \ell_1^{\infty}(C_f) \le \ell_1^{\infty}((0,1)^{\infty})=1.$

\end{proof}

\begin{rem}Note that for each Lebesgue integrable function $f$ in $(0,1)$, the following  inclusion $S \cap A_f \subseteq S \cap C_f$ holds true, but the converse inclusion is not always valid. Indeed, let $(x_k)_{k \in N}$ be an arbitrary sequence of uniformly
distributed numbers in $(0,1)$. Then the function $f:(0,1)\to \mathbf{R}$,  defined by $f(x)= \chi_{(0,1)\setminus \{x_k:k \in {\bf N}\}}(x)$ for $x \in (0,1)$, is Lebesgue integrable, $(x_k)_{k \in N} \in C_f \cap S$ but  $(x_k)_{k \in N} \notin A_f \cap S$ because
$$
\lim_{N \to
\infty}\frac{1}{N}\sum_{n=1}^Nf(x_n)=0 \neq 1 = \int_{(0,1)}
f(x)dx.
$$
\end{rem}

\begin{thm}  Let $f:(0,1)\to {\bf R}$ be Lebesgue integrable function.
Then the set $S_f$ of all  sequences $(x_k)_{k \in {\bf N}} \in (0,1)^{\infty}$  for which the following conditions

1) $\lim_{n \to \infty}\frac{f(x_n)}{n}=0$;

2) ~$\lim_{N \to \infty}\frac{1}{N}\sum_{k=1}^Nf(x_k) $ exists;

3) $\lim_{N \to \infty}\frac{1}{N}\sum_{k=1}^Nf(x_k) = \int_{(0,1)}f(x)dx$;

4) ~ $(x_k)_{k \in {\bf N}}$ is uniformly distributed in $(0,1)$

are equivalent,  has  $\ell_1^{\infty}$ measure one.
\end{thm}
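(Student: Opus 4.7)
The plan is to observe that Theorem 3.3 is almost an immediate bookkeeping consequence of what has already been proved: we already have four full-measure subsets of $(0,1)^{\infty}$, one tailored to each of the four conditions, and their intersection will be contained in $S_f$.

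First I would collect the pertinent full-measure sets. From Lemma 2.1, the set $S$ of uniformly distributed sequences in $(0,1)$ satisfies $\ell_1^{\infty}(S)=1$, which takes care of condition (4). From Theorem 3.1 (equivalently Remark 3.1) the set $A_f$ of sequences on which $\frac{1}{N}\sum_{k=1}^N f(x_k) \to \int_0^1 f\,dx$ has $\ell_1^{\infty}(A_f)=1$, which is condition (3). From Corollary 3.1 the set $B_f$ on which the limit $\lim_N \frac{1}{N}\sum_{k=1}^N f(x_k)$ merely exists satisfies $\ell_1^{\infty}(B_f)=1$, which is condition (2). From Corollary 3.2 the set $C_f$ on which $f(x_N)/N \to 0$ satisfies $\ell_1^{\infty}(C_f)=1$, which is condition (1).

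Next I would set
\[
T \;=\; S \cap A_f \cap B_f \cap C_f.
\]
Each factor has full $\ell_1^{\infty}$-measure, so as a finite intersection of full-measure sets, $\ell_1^{\infty}(T)=1$. For any sequence $(x_k) \in T$, all four conditions (1)--(4) hold simultaneously; in particular they are trivially logically equivalent on $T$ (true $\Leftrightarrow$ true $\Leftrightarrow$ true $\Leftrightarrow$ true). Hence $T \subseteq S_f$, and therefore $\ell_1^{\infty}(S_f)\ge \ell_1^{\infty}(T)=1$, giving $\ell_1^{\infty}(S_f)=1$ as claimed.

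The only potentially delicate point is the reading of the word ``equivalent'' in the statement of $S_f$: since the four conditions are not in general logically equivalent for \emph{every} sequence (for instance the counterexample in Remark 3.2 shows $C_f \cap S \not\subseteq A_f \cap S$), the argument relies on identifying $S_f$ with the set where each implication between (1)--(4) holds in the trivial sense that all four conclusions are valid at once. Once this is accepted, no obstacle remains, and the whole theorem reduces to the closure of the class of full-measure sets under finite intersections.
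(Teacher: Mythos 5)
Your proof is correct and follows essentially the same route as the paper: intersect the four full-measure sets $S$, $A_f$, $B_f$, $C_f$ from Lemma 2.1, Remark 3.1, Corollary 3.1 and Corollary 3.2, and note that on the intersection all four conditions hold and are therefore trivially equivalent. If anything, your version is slightly more careful, since the paper asserts the equality $S_f=A_f\cap B_f\cap C_f\cap S$ outright, while you only claim the inclusion $T\subseteq S_f$, which is all that is needed (and is the accurate one, given that $S_f$ as defined also contains sequences on which all the conditions fail).
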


\begin{proof} By Lemma 2.1 we know that $\ell_1^{\infty}(S)=1$.  By Remark 3.1 we have $\ell_1^{\infty}(A_f)=1$.
Following Corollaries  3.1 and 3.2 we have $\ell_1^{\infty}(B_f)=1$
and $\ell_1^{\infty}(C_f)=1$, respectively.  Since $S_f=A_f \cap B_f \cap C_f \cap S$, we get
$$
\ell_1^{\infty}(D)=\ell_1^{\infty}(A_f \cap B_f \cap C_f \cap S)=1.
$$

\end{proof}

The next corollary is a simple consequence of Theorem 3.3.

\begin{cor}  Let  $\mathbf{Q}$ be a set of all rational numbers of  $[0,1]$  and $F \subseteq [0,1]\cap \mathbf{Q}$ be finite. Let
$f : [0,1] \to R$ be Lebesgue integrable, $\ell_1$-almost everywhere
continuous and locally bounded on $[0,1] \setminus F$. Assume that for every $\beta \in F$  there is some neighbourhood $U_{\beta}$ of $\beta$
such that   $f$ is either bounded or monotone in $[0,\beta) \cap U_{\beta}$ and in $(\beta,1]\cap U_{\beta}$ as well.
Let $S,A_f,B_f,C_f$ come from Lemma 2.1, Remark 3.1, Corollary 3.1,Corollary 3.2, respectively.
We set
$$S_f=\big(A_f \cap B_f \cap C_f \cap S) \cup ((0,1)^{\infty} \setminus A_f) \cap \big((0,1)^{\infty} \setminus B_f) \cap ((0,1)^{\infty} \setminus C_f)\cap S).
$$

Then for $(x_k)_{k \in {\bf N}} \in S_f$  the following  conditions are equivalent:

1) $\lim_{n \to \infty}\frac{f(x_n)}{n}=0$;

2) ~$\lim_{N \to \infty}\frac{1}{N}\sum_{k=1}^Nf(x_k) $ exists;

3) $\lim_{N \to \infty}\frac{1}{N}\sum_{k=1}^Nf(x_k) = \int_{(0,1)}f(x)dx$;

\end{cor}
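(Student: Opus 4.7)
My plan is to exploit the disjunctive structure of $S_f$: it has been defined precisely as the union of two pieces on which all three conditions are forced to agree — either simultaneously true or simultaneously false. Once this dichotomy is set out, the equivalence is immediate and requires no additional analytic input.

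First I would rewrite the set as
$$S_f \;=\; \bigl(A_f \cap B_f \cap C_f \cap S\bigr) \;\cup\; \bigl((A_f)^c \cap (B_f)^c \cap (C_f)^c \cap S\bigr),$$
with complements taken inside $(0,1)^{\infty}$, and then record the key identifications provided by Remark 3.1 and Corollaries 3.1--3.2: membership in $A_f$ is exactly condition 3), membership in $B_f$ is exactly condition 2), and membership in $C_f$ is exactly condition 1). With those bookkeeping observations in hand, the corollary reduces to a truth-table remark.

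Second, for $(x_k)_{k \in \mathbf{N}} \in A_f \cap B_f \cap C_f \cap S$ one simply reads off that 1), 2) and 3) all hold, which trivially makes them pairwise equivalent (three true statements are equivalent). Symmetrically, for $(x_k)_{k \in \mathbf{N}}$ in the second piece all three of 1), 2), 3) fail, so again the (vacuous) equivalence is automatic (three false statements are equivalent). Since every element of $S_f$ lies in one of these two pieces, we are done.

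The only subtle aspect — and the ``main obstacle'' such as it is — is conceptual rather than technical. One must recognize that the analytic hypotheses on $f$ stated in the corollary (Lebesgue integrability, $\ell_1$-almost everywhere continuity, local boundedness off $F$, and one-sided monotonicity near each $\beta \in F$) do \emph{not} enter the argument on $S_f$; they are the Baxa--Schoi$\beta$engeier hypotheses from Theorem 1.2 whose presence in the corollary is there to place the result in context as a strengthening of that theorem and to identify the irrational-rotation sequences as members of the first piece. In particular, none of the probabilistic machinery used in Section 2 — the variance bound in Lemma 2.3, Toeplitz' lemma, or the Borel--Cantelli-style arguments of Lemma 2.5 — is needed at this stage; Corollary 3.3 is by construction a set-theoretic consequence of how $S_f$ was assembled.
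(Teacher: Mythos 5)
Your proof is correct and is essentially the argument the paper intends: the paper offers no written proof, merely noting that the corollary is ``a simple consequence of Theorem 3.3,'' and the intended content is exactly your observation that membership in $A_f$, $B_f$, $C_f$ \emph{is} conditions 3), 2), 1) respectively, so on the first piece of $S_f$ all three hold and on the second piece all three fail, making the equivalence automatic. Your remark that the analytic hypotheses on $f$ play no role in this step (they serve only to connect the statement to the Baxa--Schoi$\beta$engeier setting) is also accurate.
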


\begin{rem} Note that $S_f$ is maximal subset of the set $S$ for which conditions 1)-3) of Corollary 3.3  are equivalent, provided that
for each $(x_k)_{k \in N} \in  S_f$ the sentences 1)-3) or their negations are  true simultaneously, and
for each $(x_k)_{k \in N} \in S \setminus S_f$ the sentences 1)-3) or their negations are not true simultaneously. This extends  the main result of Baxa and Schoi$\beta$engeier \cite{BaxSch2002} because, the sequence of the form
$(\{n\alpha\})_{n \in N}$  is  in $S_f$  for each irrational number $\alpha$, and no every element of $S_f$ can be presented in the same form.
For example,
$$(\{(n+1/2(1-\chi_{\{ k: k\ge 2\}}(n)))\pi^{\chi_{\{ k: k\ge 2\}}(n)}\})_{n \in {\bf N}} \in D \setminus S^{*},$$
where $\{\cdot\}$ denotes the fractional part of the real number and  $\chi_{\{ k: k\ge 2\}}$ denotes the indicator function of the set $\{ k: k\ge 2\}$.

Similarly, setting
$$D_f=\big(A_f \cap B_f \cap C_f) \cup ((0,1)^{\infty} \setminus A_f)\big) \cap \big((0,1)^{\infty} \setminus B_f) \cap ((0,1)^{\infty} \setminus C_f)\big),
$$
we get a maximal subset of $(0,1)^{\infty}$ for which conditions 1)-3) of Corollary 3.3 are equivalent, provided that
for each $(x_k)_{k \in N} \in  D_f$ the sentences 1)-3) or their negations are  true simultaneously, and for each $(x_k)_{k \in N} \in
(0,1)^{\infty} \setminus D_f$ the sentences 1)-3) or their negations are not true simultaneously.
\end{rem}



\begin{thebibliography}{99}


\bibitem{Weyl}
H. Weyl,{\it  \'{U}ber ein Problem aus dem Gebiete der diophantischen Approx-
imation,}{ Marchr. Ges. Wiss. G\'{o}tingen. Math-phys. K1.} (1916),  234-244.

\bibitem{KuNi74}
 L. Kuipers,  H. Niederreiter,
{\em Uniform distribution of sequences},
Wiley-Interscience [John Wiley \& Sons], New York-London-Sydney (1974).




\bibitem{Hardy2}{G.  Hardy, J. Littlewood,} {\em Some problems of
diophantine approximation,} { Acta Math. } {\bf 37 (1)} (1914),
193--239.


\bibitem{Hardy1}
{G.  Hardy, J. Littlewood,}  {\em Some problems of diophantine
approximation}, { Acta Math.} {\bf 37 (1)} (1914), 155--191.

\bibitem{Pant2015}
{G. R. Pantsulaia,} {\em Infinite-dimensional Monte-Carlo integration.} Monte Carlo Methods Appl. {\bf 21} (2015), no. 4, 283--299.


\bibitem{Sobol73}
{I. M. Sobol,} {\em Computation of improper integrals by means of equidistributed sequences,} (Russian) {Dokl. Akad. Nauk SSSR.} {\bf  210
}(1973), 278--281.

\bibitem{Shiryaev1980}
 Shiryaev A.N., Probability (in Russian), Izd.“Nauka”, Moscow, 1980.

\bibitem{BaxSch2002} {C. Baxa,  J. Schoi$\beta$engeier,} {\em Calculation of improper integrals using $(n\alpha)$-sequences}, Dedicated to Edmund
    Hlawka on the occasion of his $85$ th birthday. Monatsh. Math. {\bf 135(4)} (2002), 265--277.


\bibitem{Nik1}
 S.M. Nikolski, {\em  Course of mathematical analysis} (in Russian),
 no. 1, Moscow (1983).








 \end{thebibliography}
\end{document}